\title{Separability in Morse local-to-global groups}
\author{Lawk Mineh and Davide Spriano}
\subjclass{20F67; 20E26 }
\address{School of Mathematics, University of Southampton, Southampton, SO17 1BJ, UK}
\email{L.Mineh@soton.ac.uk}
\address{Mathematical Institute, University of Oxford, Oxford, OX2 6GG, UK}
\email{spriano@maths.ox.ac.uk}
\begin{document}

\begin{abstract}
   We show that in a Morse local-to-global group where stable subgroups are separable, the product of any stable subgroups is separable. As an application, we show that the product of stable subgroups in virtually special groups is separable. 
\end{abstract}

\maketitle

\section{Introduction}
Given a group \(G\), we can equip it with the \emph{profinite topology}, whose basic open subsets are cosets of finite index subgroups of \(G\).
A subset of \(G\) is said to be \emph{separable} if it is closed in the profinite topology on \(G\).
The group \(G\) is called \emph{residually finite} if the trivial subgroup is separable in \(G\).

Knowing that particular subsets of groups are separable often gives useful information about the group.
For example, in a finitely presented group, separability of a finitely generated subgroup gives a solution to the membership problem for that subgroup.
In a geometric setting, separability properties of the fundamental group of a space correspond to desirable lifting properties of that space:
immersed subcomplexes of a complex \(X\) may be promoted to embedded ones in a finite sheeted cover of \(X\), provided that their corresponding subgroups are separable in \(\pi_1 X\).
For an example involving subsets rather than subgroups, it has been proven that if \(X\) is a nonpositively curved cube complex in which every double coset of hyperplane stabilisers is separable in \(\pi_1 X\), then \(X\) has a finite sheeted special cover \cite{Haglund-Wise}.

It is a difficult problem to show that a given subset of a group is separable, especially when one is only given some geometric data about the group.
For instance, even the question of whether hyperbolic groups are residually finite is a long-standing open problem.
It is known that all hyperbolic groups are residually finite if and only if every quasiconvex subgroup is separable in every hyperbolic group \cite{AgolGrovesManning}.
Minasyan showed that if \(G\) is a hyperbolic group in which every quasiconvex subgroup is separable, the setwise product of any finite number of quasiconvex subgroups is also separable in \(G\) \cite{MinasyanGFERF}, extending a result of Ribes and Zalesskii, who proved the same result in the case \(G\) is free \cite{RibesZalesskii}.
Recently, the first author and Minasyan provided generalisation of this result in the setting of relatively hyperbolic groups \cite{MinMin}.
In this paper, we will provide another natural generalisation of this product separability result to the class of groups with the \emph{Morse local-to-global} (MLTG) property.

Introduced in \cite{MLTG}, the MLTG property roughly speaking requires that quasigeodesics with hyperbolic-like properties behave similarly to quasigeodesics in hyperbolic spaces. 
Consider the following two perspectives on hyperbolic spaces. 
The first involves Morse geodesics: we say that a quasigeodesic is \emph{Morse} if any other quasigeodesic with the same endpoints stays uniformly finite Hausdorff-distance from it (see Definition \ref{def:MorseQG}). 
It is a well-known fact that every quasigeodesic in a hyperbolic space satisfies the Morse property, and moreover that a space is hyperbolic if and only if all of its geodesics are uniformly Morse \cite{BonkQuasigeodesic} (for a discussion on uniformity, see Section 10 in \cite{CashenMackay}).
This motivates the study of Morse quasigeodesics in spaces that are not hyperbolic, an approach that has been successful in understanding the properties of spaces up to quasi-isometry \cite{CordesHumeStability,IncertiMedici,GraeberKarrerLazarovich,CordesRussellSprianoZalloum,QuingRafi}. 
On the other hand, Gromov showed that a space is hyperbolic if and only if all of it local quasigeodesics, i.e. paths that are quasigeodesics on every subpath of a certain length (see Definition \ref{def:LocalProperty}) are globally quasigeodesics. 
The Morse local-to-global property puts these two perspectives together and prescribes that all paths that are locally Morse quasigeodesics are globally Morse quasigeodesics. 

In groups with the MLTG property, elements acting on Morse geodesics behave ``as they should''. 
For instance, it is appealing to think that given two independent infinite order elements with Morse axes, then it is possible to use a ping-pong argument to generate a free subgroup using these elements. 
In general finitely generated groups this is not true, and we require the MLTG property in order to run such arguments. 
The above suggests that the failure of the MLTG property seems to imply some pathological behaviour.
Indeed, the only known examples of groups without the MLTG property are not finitely presentable. 
On the other hand, many well-behaved classes of groups, such as 3-manifold groups, CAT(0) groups, and mapping class groups are known to satisfy the MLTG property.

Our main theorem is concerned with separability of products of \emph{stable subgroups}. 
Stable subgroups were introduced by Durham and Taylor, who showed that the convex cocompact subgroups of the mapping class groups are precisely the stable ones \cite{DurhamTaylorStability}. 
For infinite cyclic subgroups the notion of stability and fixing a Morse quasigeodesic agree, and in general stable subgroups present many properties akin to quasiconvex subgroups of hyperbolic groups. 

\begin{thm}
\label{thm:MTLG_stable_product_sep}
    Let \(G\) be a finitely generated group with the Morse local-to-global property, and suppose that any stable subgroup of \(G\) is separable.
    Then the product of any stable subgroups of \(G\) is separable.
\end{thm}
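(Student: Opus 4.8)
The plan is to establish the result by induction on the number of factors in the product \(P = H_1 \cdots H_n\), following the template of Minasyan's argument in the hyperbolic case \cite{MinasyanGFERF} and its relatively hyperbolic generalisation \cite{MinMin}, with the Morse local-to-global property playing the role that hyperbolicity plays there. Recall that a subset \(S \subseteq G\) is separable precisely when, for every \(g \notin S\), there is a finite quotient \(\phi \colon G \to Q\) with \(\phi(g) \notin \phi(S)\); since the image of a product of subgroups is the product of the images, the task is to produce a single finite quotient in which the image of \(g\) escapes the (finite) product of images. The base case \(n=1\) is exactly the hypothesis that stable subgroups are separable, so all the work is in the inductive step, which I would organise around two ingredients: a structural understanding of the family of stable subgroups, and separability of double cosets \(HgK\) of stable subgroups \(H, K\).

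First I would record the properties of stable subgroups that make them behave like quasiconvex subgroups of a hyperbolic group. Stable subgroups are finitely generated and undistorted, the family of all stable subgroups is closed under conjugation, and, most importantly, finite intersections of stable subgroups are again stable. This last point is exactly where the Morse local-to-global hypothesis should be decisive: a path lying in a bounded neighbourhood of an intersection is locally a Morse quasigeodesic (in the sense of Definition~\ref{def:MorseQG}) with respect to each factor, and the MLTG property promotes this local control to a global statement, forcing the intersection to be stable. In the same vein I would establish a bounded packing statement for cosets of a stable subgroup: only boundedly many distinct cosets can pairwise come within a fixed distance. This is the finiteness that keeps the combinatorics of the induction under control.

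The geometric heart of the argument is separability of a double coset \(HgK\) with \(H, K\) stable. Given \(x \notin HgK\), I would consider a Morse quasigeodesic joining a suitable coset of \(H\) to a suitable coset of \(K\) and argue that membership of \(x\) in the double coset is equivalent to this quasigeodesic being trackable, at bounded scale, through neighbourhoods of the two cosets. Because the relevant paths are locally Morse quasigeodesics, the MLTG property guarantees that any such tracking is globally controlled, so the failure \(x \notin HgK\) is witnessed by an obstruction of bounded size. Using separability of \(H\), of \(K\), and of the stable intersection \(H \cap gKg^{-1}\) (available by the previous step), this bounded obstruction can be promoted to a finite quotient separating \(x\) from \(HgK\). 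I expect this translation from a geometric non-membership statement to an explicit finite-index subgroup to be the main obstacle, since it requires simultaneously controlling all the cosets that the quasigeodesic passes near; the bounded packing bound and the separability of intersections are precisely what make this control possible.

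Finally I would close the induction. Writing \(P = H_1 \cdot (H_2 \cdots H_n)\) and fixing \(g \notin P\), I would pass to a finite-index subgroup \(G' \le G\) inside which each \(H_i\) retracts, up to finitely many cosets, onto the stable subgroup \(H_i \cap G'\); by the inductive hypothesis the \((n-1)\)-fold product is separable in \(G'\), and double-coset separability lets the remaining factor \(H_1\) be assembled against it. The bounded packing bound ensures that only finitely many coset configurations can contribute to \(P\) near \(g\), so finitely many finite quotients suffice, and a common refinement of them separates \(g\) from \(P\).
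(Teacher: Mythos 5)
Your overall frame (induction on the number of factors, base case given by the hypothesis) matches the paper, but the inductive step as you describe it has a genuine gap, and it sits exactly where all the difficulty lives. Your plan is: prove separability of double cosets \(HgK\) of stable subgroups, then ``assemble'' the remaining factor against the \((n-1)\)-fold product using double-coset separability and bounded packing. But separability of double cosets is just the \(n=2\) instance of the theorem (by Remark~\ref{rmk:sep_of_products}, \(HgK\) is separable iff the product \(H\cdot gKg^{-1}\) of two stable subgroups is), and knowing it does not let you close the induction: the product of a separable subgroup with a separable \emph{subset} (such as \(H_2\cdots H_n\)) is in general not separable, and neither bounded packing nor passing to a finite-index subgroup \(G'\) supplies a mechanism for producing the required finite quotient. ``Only finitely many coset configurations can contribute to \(P\) near \(g\)'' is a restatement of the goal, not an argument; this is precisely the step that Minasyan's method (which the paper adapts) is designed to circumvent. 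Relatedly, your sketch of the double-coset case itself --- non-membership is ``witnessed by an obstruction of bounded size'' which ``can be promoted to a finite quotient'' --- never explains how a bounded geometric obstruction becomes a finite-index subgroup; in the paper even the \(n=2\) case requires the full machinery described below, not a softer standalone argument. Finally, the place you propose to spend the MLTG hypothesis (showing intersections of stable subgroups are stable) is off-target: the paper never needs stability of \(Q\cap R\), only its separability, which follows from separability of \(Q\) and \(R\).

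The idea you are missing is the construction of auxiliary separable supersets that \emph{reduce the number of factors}. Writing \(Q=H_1\), \(R=H_2\), \(I=Q\cap R\), the paper takes a nested sequence \(N_i'\) of finite-index subgroups of \(G\) with \(\bigcap_i N_i' = I\), and sets \(K_i = Q\,\langle Q_i',R_i'\rangle\,R\,T_1\cdots T_s\) where \(Q_i'=N_i'\cap Q\) and \(R_i'=N_i'\cap R\). The first use of MLTG is the combination theorem (Proposition~\ref{prop:stable_combination_thm}): for large \(i\), every element of \((Q_i'\cup R_i')\setminus I\) is long, so \(\langle Q_i',R_i'\rangle\cong Q_i'\ast_{I} R_i'\) is \emph{stable}; hence \(K_i\) is a finite union of translates of a product of \(n-1\) stable subgroups and is separable by the induction hypothesis. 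Since \(g\) lies in the profinite closure of \(H_1\cdots H_n\subseteq K_i\), one gets \(g\in K_i\) for all large \(i\), producing product expressions for \(g\). The second use of MLTG is the dichotomy that rules these out: if some factor in these expressions stays bounded along a subsequence, \(g\) collapses into a translated product with one fewer stable factor, contradicting the induction hypothesis via a separability argument; if all relevant factors go to infinity, then the Gromov-product bound of Lemma~\ref{lem:stable_gromov_product_bound} and the local-to-global concatenation Lemma~\ref{lem:concat_of_morse_geodesics} show the product expression is a uniform Morse quasigeodesic, forcing \(\abs{g}_S\) to be arbitrarily large, a contradiction. Without the factor-reduction step through the stable amalgam \(Q_i'\ast_I R_i'\), your induction has no way to advance from double cosets to longer products.
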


Recall that a group is \emph{LERF} (locally extended residually finite) if every finitely generated subgroup is separable.
The following statement may be of more general interest, for instance as a criterion for showing that a given group is not LERF.
As stable subgroups are always finitely generated (see Lemma~\ref{lem:props_of_stable_subgps}), the hypotheses are stronger than the above theorem.

\begin{cor}
    Let \(G\) be a finitely generated LERF group with the Morse local-to-global property. Then the product of any stable subgroups of \(G\) is separable.
\end{cor}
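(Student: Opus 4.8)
The plan is to deduce this statement directly from Theorem~\ref{thm:MTLG_stable_product_sep} by verifying that the LERF hypothesis forces the separability assumption on stable subgroups that the theorem requires. The only work is to observe that LERF, which asserts separability of \emph{finitely generated} subgroups, applies to stable subgroups precisely because stability entails finite generation.

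First I would invoke Lemma~\ref{lem:props_of_stable_subgps} to record that every stable subgroup of \(G\) is finitely generated. Next, using the definition of LERF, every finitely generated subgroup of \(G\) is separable; combining these two facts shows that every stable subgroup of \(G\) is separable. At this point the hypotheses of Theorem~\ref{thm:MTLG_stable_product_sep} are met: \(G\) is finitely generated, has the Morse local-to-global property, and has the property that all of its stable subgroups are separable.

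I would then simply apply Theorem~\ref{thm:MTLG_stable_product_sep} to conclude that the product of any stable subgroups of \(G\) is separable, which is the desired statement. There is no genuine obstacle here; the content of the corollary lies entirely in the theorem, and the role of this statement is to repackage it under the more familiar and easily checkable LERF hypothesis, at the cost of the (generally strictly stronger) assumption that \emph{all} finitely generated subgroups, rather than merely the stable ones, are separable.
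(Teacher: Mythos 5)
Your proposal is correct and matches the paper's (implicit) argument exactly: the paper itself notes that stable subgroups are always finitely generated by Lemma~\ref{lem:props_of_stable_subgps}, so LERF implies separability of stable subgroups and Theorem~\ref{thm:MTLG_stable_product_sep} applies directly. There is nothing more to the corollary than this reduction.
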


A group is \emph{virtually special} if it is has a finite index subgroup that is the fundamental group of a special cube complex.
Triple cosets of convex subgroups in virtually special groups are known to be separable \cite{Shepherd_Imitator_homs}.
We extend this result to arbitrary products of stable subgroups, which are quasiconvex.

\begin{cor}
\label{cor:special_stable_product}
    Let \(G\) be a virtually special group. Then the product of any stable subgroups of \(G\) is separable.
\end{cor}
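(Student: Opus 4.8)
The plan is to deduce Corollary~\ref{cor:special_stable_product} directly from Theorem~\ref{thm:MTLG_stable_product_sep}. To apply the theorem I need two facts about a virtually special group \(G\): first, that \(G\) has the Morse local-to-global property, and second, that every stable subgroup of \(G\) is separable. Once both are in hand, the theorem gives separability of arbitrary products of stable subgroups with no further work.

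For the MLTG property, I would recall that virtually special groups are CAT(0) groups (indeed they act properly cocompactly on CAT(0) cube complexes, at least after passing to a finite-index subgroup, and the property passes between commensurable groups). Since CAT(0) groups are cited in the introduction as satisfying the MLTG property, this step should be a short invocation of known results, taking care that virtual specialness is inherited by and reflected from finite-index subgroups so that the property holds for \(G\) itself rather than merely a subgroup. The main point to verify carefully is that MLTG is a commensurability invariant, or that the finite-index special subgroup suffices to conclude MLTG for the whole group.

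For separability of stable subgroups, the key observation is that stable subgroups are quasiconvex (as the excerpt notes just before the statement) and finitely generated. The plan is to use the cited result of Shepherd~\cite{Shepherd_Imitator_homs}, or the more classical fact that convex subgroups of special groups are separable, to conclude that every stable subgroup of a virtually special group is separable. Here I would need to bridge from ``stable'' to ``convex'' (or to whatever hypothesis the separability result actually requires): a stable subgroup is in particular undistorted and quasiconvex, and in the cube-complex setting such subgroups can be taken to be convex up to finite index, so the separability of convex subgroups transfers to stable subgroups. Passing through a finite-index special subgroup \(H \le G\), a stable subgroup \(S \le G\) meets \(H\) in a finite-index stable subgroup \(S \cap H\) which is separable in \(H\) hence in \(G\), and separability of \(S\) follows since \(S\) is a finite union of cosets of \(S \cap H\).

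The main obstacle I anticipate is the separability step, specifically matching the precise hypothesis of the cited separability theorem (stated for convex subgroups or triple cosets) to the class of stable subgroups. Showing that a stable subgroup of a virtually special group can be realised, up to finite index, as a convex cocompact subgroup of a CAT(0) cube complex is where the real content lies; the MLTG verification and the final application of Theorem~\ref{thm:MTLG_stable_product_sep} are then routine. I would therefore spend most of the effort establishing that stability implies separability in this setting, and only briefly cite the MLTG property for CAT(0) groups.
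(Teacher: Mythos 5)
Your proposal takes essentially the same route as the paper: verify the two hypotheses of Theorem~\ref{thm:MTLG_stable_product_sep} and invoke it. The difference lies in how each hypothesis is dispatched. For the MLTG property, the paper simply cites \cite[Theorem D]{MLTG} for CAT(0) groups; your worry about commensurability invariance is unnecessary, since \(G\) itself acts properly cocompactly on the CAT(0) cube complex associated to its finite-index special subgroup, so \(G\) is already a CAT(0) group. For separability of stable subgroups, the paper uses a one-line citation: stable subgroups are quasiconvex, and quasiconvex subgroups of virtually special groups are separable by \cite[Corollary 7.9]{Haglund-Wise}. You instead propose routing through Shepherd's result \cite{Shepherd_Imitator_homs} or convex-subgroup separability, bridging from stable to (virtually) convex-cocompact and reducing via a finite-index special subgroup \(H \leqslant G\) and the cosets of \(S \cap H\). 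That route can be made to work -- the finite-index reductions you describe are standard, and stable (equivalently strongly quasiconvex and hyperbolic) subgroups of cocompactly cubulated groups are indeed cubically convex-cocompact -- but it is heavier than what the paper does, and the bridging step you flag as the ``real content'' is a nontrivial theorem you would need to locate rather than prove. On the other hand, your caution about matching the word ``quasiconvex'' to the precise hypotheses of the cited separability theorem in the non-hyperbolic cubical setting is a legitimate point that the paper's one-line proof glosses over.
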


\emph{Strongly quasiconvex} subgroups, also known as \emph{Morse} subgroups, were introduced independently by Genevois and Tran \cite{GenevoisHyperbolicities,TranOnStrongly}, and Tran showed that a subgroup of a finitely generated group is stable if and only if it is strongly quasiconvex and hyperbolic \cite[Proposition~4.3]{TranOnStrongly}.
In the case of right-angled Artin groups, which contain many stable subgroups \cite{Stable_subgroups_RAAGS}, we can use \cite[Corollary~7.4]{RSTConvexity} to deduce the following. 

\begin{cor}
\label{cor:raag_strongly_qc_product}
    Suppose that \(\Gamma\) is a finite connected graph, and let \(A_\Gamma\) be the associated right-angled Artin group.
    Then the product of any strongly quasiconvex subgroups of \(A_\Gamma\) is separable.
\end{cor}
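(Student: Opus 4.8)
The plan is to reduce the statement to Corollary \ref{cor:special_stable_product} by feeding in the structural classification of strongly quasiconvex subgroups of right-angled Artin groups. First I would record the easy structural inputs: the Salvetti complex realises $A_\Gamma$ as the fundamental group of a special cube complex, so $A_\Gamma$ is virtually special, and Corollary \ref{cor:special_stable_product} already tells us that the product of any finite collection of \emph{stable} subgroups of $A_\Gamma$ is separable. The only gap between this and the desired statement is that a strongly quasiconvex subgroup need not be stable: by Tran's theorem \cite[Proposition~4.3]{TranOnStrongly}, stability is equivalent to being strongly quasiconvex \emph{and} hyperbolic, so the strongly quasiconvex subgroups that I must additionally control are precisely the non-hyperbolic ones.

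The key input handling these is \cite[Corollary~7.4]{RSTConvexity}: since $\Gamma$ is connected, it yields that every strongly quasiconvex subgroup of $A_\Gamma$ is either stable or of finite index. The connectedness hypothesis is genuinely needed here, since in a free product such as $\ZZ^2 * \ZZ^2$ a free factor is strongly quasiconvex, of infinite index, and not hyperbolic, so no such clean dichotomy can hold for disconnected $\Gamma$. Given strongly quasiconvex subgroups $H_1, \dots, H_n \le A_\Gamma$, I would therefore split into two cases according to this dichotomy.

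If every $H_i$ is stable, then Corollary \ref{cor:special_stable_product} applies verbatim and the product $H_1 \cdots H_n$ is separable. If instead some $H_j$ has finite index in $A_\Gamma$, I claim the product is in fact clopen in the profinite topology, and hence separable, by an elementary argument that does not invoke the main theorem at all. Writing $K = H_j$, $A = H_1 \cdots H_{j-1}$ and $B = H_{j+1} \cdots H_n$, the finite-index subgroup $K$ is clopen with only finitely many left and right cosets; thus $AK$ is a finite union of (clopen) left cosets of $K$, say $AK = \bigsqcup_i x_i K$, and similarly $KB = \bigsqcup_j K y_j$ is a finite union of (clopen) right cosets. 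Consequently $H_1 \cdots H_n = AKB = \bigcup_{i,j} x_i K y_j$ is a finite union of two-sided translates of the clopen set $K$, so it is clopen. Since these two cases are exhaustive, this completes the argument.

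The bulk of the difficulty has already been absorbed into Corollary \ref{cor:special_stable_product} (and hence Theorem \ref{thm:MTLG_stable_product_sep}), so the remaining work is essentially organisational. The main point requiring care is the correct application of \cite[Corollary~7.4]{RSTConvexity}: one must verify that its hypotheses match our setting and that its conclusion really does place every strongly quasiconvex subgroup into the stable-or-finite-index dichotomy above, since it is this dichotomy, rather than any new separability estimate, that drives the proof.
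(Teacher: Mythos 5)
Your proposal is correct and takes essentially the same route as the paper: the same dichotomy from \cite[Corollary~7.4]{RSTConvexity} (finite index versus stable), the same elementary observation that a product containing a finite-index factor is a finite union of translates of that subgroup and hence separable, and the main separability result in the all-stable case. The only cosmetic difference is that you invoke Corollary~\ref{cor:special_stable_product} where the paper applies Theorem~\ref{thm:MTLG_stable_product_sep} directly (citing that \(\operatorname{CAT}(0)\) groups are MLTG); since that corollary is itself deduced from the theorem this is the same argument, and your version has the small merit of making explicit that stable subgroups of \(A_\Gamma\) are separable, a hypothesis the paper's proof of this corollary leaves implicit.
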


\subsection{Acknowledgments}
The authors are grateful to the support of the CRM in Montr\'{e}al where part of this work was completed. We would like to thank Sam Shepherd for interesting conversations on the topic, and the anonymous referee for their careful reading of the paper.

%%%%%%%%%%%%%%%%%%%%%%%%%%%%%%%%%
%         NEW SECTION           %
%%%%%%%%%%%%%%%%%%%%%%%%%%%%%%%%%

\section{Preliminaries}

Let us establish some notational conventions.
Given a group \(G\) and subgroup \(H \leqslant G\), we will write \(H \leqslant_f G\) when \(H\) has finite index in \(G\).
If \(g \in G\), we will use \(H^g\) to denote the conjugate subgroup \(gHg^{-1}\).

For a metric space \(X\) and points \(x, y, z \in X\), we will write
\[
    \langle x, y \rangle_z = \frac{1}{2} \Big( \dist(x,z) + \dist(y,z) - \dist(x,y) \Big)
\]
for the \emph{Gromov product} of \(x\) and \(y\) with respect to \(z\).
When \(X\) is the Euclidean plane, \(\langle x, y \rangle_z\) is exactly the distance between \(z\) and the points of tangency on an incircle for the triangle with vertices \(x, y\), and \(z\).
The Gromov product thus acts as a vague analogue for the notion of the angle spanned by two geodesics issuing from a single point in a metric space.

In this paper, we will restrict our attention to Cayley graphs of groups. 
Let $G$ be a group and $S$ a generating set for $G$. The \emph{Cayley graph} of $G$ with respect to $S$ is the graph $\mathrm{Cay}(G,S)$ with vertex set $G$ and elements $g,h$ connected by an edge if either $gh^{-1}\in S$ or $hg^{-1}\in S$.

We equip the set of vertices of a graph with the metric induced by declaring all of its edges to have length one. 
For a Cayley graph \(\mathrm{Cay}(G,S)\), we write \(\dist_S\) for this metric, which is exactly the word metric on \(G\) with respect to \(S\).
For \(g \in G\), we will write \(\abs{g}_S = \dist_S(1,g)\).

Given a path \(\gamma\) of a graph, we will denote its length (i.e. number of edges) by \(\ell(\gamma)\).
Given metric spaces $(X, \dist_X)$ and $(Y,\dist_Y)$, a \emph{$(\lambda,c)$-quasi-isometric embedding} is a map $f\colon X \to Y$ such that the following holds for any pair $x,y\in X$.

\[\frac{1}{\lambda}\dist_Y(f(x), f(y)) - c \leq \dist_X(x,y) \leq \lambda \dist_Y(f(x), f(y)) + c.\]
A \emph{$(\lambda,c)$-quasigeodesic} is a $(\lambda,c)$-quasi-isometric embedding of an interval $I\subset \mathbb{R}$.

The main geometric definition of the paper is the Morse local-to-global property. To define it, we need to define the Morse property and what is means for a given property of a path to be local. 

\begin{defn}[Local property]\label{def:LocalProperty}
    A path $\gamma \colon I \to X$ is said to \emph{$L$-locally} satisfy a property $P$ if each subpath of the form $\gamma\vert_{[t_1, t_2]}$ with $t_2-t_1 \leq L$ satisfies $P$. 
    When a path $\gamma$ is $L$-locally a $(\lambda,c)$-quasigeodesic, we say that $\gamma$ is a $(L; \lambda,c)$-local quasigeodesic. 
\end{defn}

\begin{defn}[Morse quasigeodesic]\label{def:MorseQG}
    Let $M\colon \mathbb{R}_{\geq1}\times\mathbb{R}_{\geq0}\to \mathbb{R}_{\geq0}$ be a non-decreasing function.
    A quasigeodesic $\gamma\colon I \to X$ is \emph{$M$-Morse} if for any $(\lambda,c)$-quasigeodesic segment $\eta \colon [a,b] \to X$ such that $\eta(a) =\gamma(t_1), \eta(b) = \gamma(t_2)$ we have 
    \[\dist_{Haus}(\gamma\vert_{[t_1, t_2]}, \eta) \leq M(\lambda,c),\] 
    where $\dist_{Haus}$ denotes the Hausdorff distance. We say that $\gamma$ is an \emph{$(M; \lambda,c)$-Morse quasigeodesic}, and \(M\) is its \emph{Morse gauge}. 
\end{defn}

Morse geodesics in any geodesic space satisfy a thin triangles condition, similar to geodesics in a hyperbolic metric space.

\begin{lemma}[{\cite[Lemma 3.6]{Qing_Morse_boundary}}]
\label{lem:morse_thin_triangles}
    Let \(X\) be a geodesic metric space and suppose that \(p\) and \(q\) are \(M\)-Morse geodesics with \(p_- = q_-\).
    There is a constant \(\delta = \delta(M) \geq 0\) such that for any geodesic \(r\) with endpoints \(r_- = p_+\) and \(r_+ = q_+\), the geodesic triangle with sides \(p, q\), and  \(r\) is \(\delta\)-thin.
\end{lemma}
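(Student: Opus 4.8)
The plan is to establish directly the three containments defining $\delta$-thinness, using the Morse property of $p$ and $q$ to trap the third side $r$ (which carries no Morse hypothesis) together with two auxiliary concatenations. Write $o = p_- = q_-$, $a = p_+$ and $b = q_+$, so that $r$ is a geodesic from $a$ to $b$. First I would fix a point $c$ on $r$ that is coarsely closest to $o$, say with $d(o,c) \le d(o,r) + 1$, and record the two subgeodesics $[a,c]$ and $[c,b]$ of $r$, whose union is all of $r$.

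The key step is to show that the concatenations $\alpha := [o,c] \cup [c,a]$ and $\beta := [o,c] \cup [c,b]$ are uniform quasigeodesics. Consider $\alpha$, a path from $o$ to $a$. Since $c$ minimises the distance to $o$ along $r$, every $y \in [c,a] \subseteq r$ satisfies $d(o,y) \ge d(o,c) - 1$; for $x \in [o,c]$ this gives $d(x,y) \ge d(o,y) - d(o,x) \ge d(x,c) - 1$, while the triangle inequality gives $d(c,y) \le d(c,x) + d(x,y)$. Combining the two estimates shows that the arc length $d(x,c) + d(c,y)$ of $\alpha$ between $x$ and $y$ is at most $3\,d(x,y) + 2$; as $\alpha$ is geodesic on each of its two pieces, it follows that $\alpha$ is a $(3,2)$-quasigeodesic from $o$ to $a$. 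The identical argument applies to $\beta$.

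Now $p$ and $q$ are $M$-Morse geodesics sharing the endpoints of $\alpha$ and $\beta$ respectively, so Definition~\ref{def:MorseQG} yields $d_H(\alpha, p) \le M_0$ and $d_H(\beta, q) \le M_0$, where $M_0 := M(3,2)$. From these two Hausdorff bounds all three containments follow by elementary neighbourhood-chasing. Since $[a,c] \subseteq \alpha \subseteq N_{M_0}(p)$ and $[c,b] \subseteq \beta \subseteq N_{M_0}(q)$, the side $r = [a,c] \cup [c,b]$ lies in $N_{M_0}(p \cup q)$. Since $p \subseteq N_{M_0}(\alpha)$ with $[o,c] \subseteq \beta \subseteq N_{M_0}(q)$ and $[c,a] \subseteq r$, we get $p \subseteq N_{2M_0}(q \cup r)$, and symmetrically $q \subseteq N_{2M_0}(p \cup r)$. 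Thus the triangle is $\delta$-thin with $\delta := 2M(3,2)$, a constant depending only on $M$.

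I expect the only genuine obstacle to be the quasigeodesic estimate of the key step, and specifically the idea that the nearest-point projection of $o$ onto $r$ is the right device for building quasigeodesics from $o$ to $a$ and from $o$ to $b$ that run through $r$. This is exactly what lets the Morse property of $p$ and $q$ be applied even though $r$ is not assumed Morse; without such a hypothesis the side $r$ could wander arbitrarily far from $p \cup q$. Once the concatenations are recognised as uniform quasigeodesics, the remainder is a routine chase through coarse neighbourhoods, and no hyperbolicity of the ambient space is required.
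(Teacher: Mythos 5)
The paper offers no proof of Lemma~\ref{lem:morse_thin_triangles} at all: it is quoted from Liu's paper on the Morse boundary. So the relevant comparison is with the standard argument in the literature, and your proof is essentially that argument: project the common vertex $o$ to a (coarsely) nearest point $c$ on the opposite side $r$, check that the concatenations $\alpha = [o,c]\cup[c,a]$ and $\beta = [o,c]\cup[c,b]$ are uniform quasigeodesics, and apply Definition~\ref{def:MorseQG} to $p$ and $q$, whose endpoints agree with those of $\alpha$ and $\beta$ respectively. Your quasigeodesic estimate is correct ($d(x,y)\geq d(x,c)-1$ from near-minimality of $d(o,c)$, then arc length at most $3d(x,y)+2$), your use of a point $c$ with $d(o,c)\leq d(o,r)+1$ rather than a true nearest point is a good touch since $X$ is only assumed geodesic (not proper), and the neighbourhood-chasing that yields the three containments with constant $2M(3,2)$ is fine.

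One caveat you should address: what your three containments express is \emph{slimness} (each side lies within $2M_0$ of the union of the other two), whereas the way this paper actually uses the lemma is stronger. In the proof of Lemma~\ref{lem:stable_gromov_product_bound}, the lemma is invoked to conclude that every point of $p$ at distance at most $\langle h,k\rangle_1$ from the common vertex is $\delta$-close to $q$; this is the tripod (Gromov-product) formulation of thinness, and it does not follow verbatim from slimness. Fortunately it follows from your containments by one elementary extra step. Every $w\in r$ satisfies $d(o,w)\geq \langle a,b\rangle_o$: summing $d(o,w)\geq d(o,a)-d(a,w)$ and $d(o,w)\geq d(o,b)-d(b,w)$ and using $d(a,w)+d(w,b)=d(a,b)$ gives this at once. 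Hence if $u\in p$ has $d(o,u)<\langle a,b\rangle_o-2M_0$, then $u$ cannot be $2M_0$-close to $r$, so by your containment $p\subseteq N_{2M_0}(q\cup r)$ there is $v\in q$ with $d(u,v)\leq 2M_0$; since $\abs{d(o,v)-d(o,u)}\leq 2M_0$, the point of $q$ at parameter $d(o,u)$ is within $4M_0$ of $u$. Points $u\in p$ with $\langle a,b\rangle_o-2M_0\leq d(o,u)\leq\langle a,b\rangle_o$ are then handled by sliding distance at most $2M_0+1$ along $p$ and $q$, giving a bound of roughly $8M_0+2$; the corners at $a$ and $b$ are treated identically. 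Adding this short argument (or stating explicitly that you prove slimness and deducing the Gromov-product version from it) makes your proof deliver the lemma in the form the paper needs, with $\delta$ still a function of $M(3,2)$ only.
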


\begin{defn}[Local Morse quasigeodesic]
    We say that a path is an \emph{$(L; M; \lambda,c)$-local Morse quasigeodesic} if it is $L$-locally an $M$-Morse $(\lambda,c)$-quasigeodesic. 
\end{defn}

\begin{defn}[MLTG property]
    We say that a metric space $X$ satisfies the \emph{Morse local-to-global property}, for short MLTG property, if for any choice of Morse gauge $M$ and constants $\lambda \geq 1, c \geq 0$ there exist a scale $L>0$, a Morse gauge $M'$ and constants $\lambda' \geq 1, c' \geq 0$ such that every $(L;M; \lambda,c)$-local Morse quasigeodesic is a $(M';\lambda',c')$-Morse quasigeodesic.  
\end{defn}

The strength of the MLTG property is that it allows us to draw global conclusions from local conditions, as the next lemma shows.  

\begin{lemma}
\label{lem:concat_of_morse_geodesics}
    Let $p = p_1 \ast \dots \ast p_n$ be a concatenation of $M$-Morse geodesics in space $X$ with the MLTG property and let $a_i$ and $a_{i+1}$ be the ordered endpoints of $p_i$. For each $\varepsilon> 0$ there are constants $B \geq 0, \lambda \geq 1, c \geq 0$, and a gauge $N$ (all depending only on $M$ and \(\varepsilon\)) such that if we have $\ell(p_i) > B$ for all $i = 2, \dots , n-1$ and $\langle(a_{i-1}), (a_{i+1})\rangle_{a_i} \leq \varepsilon$ for all $i = 2, \dots , n$, then $p$ is a $(N;\lambda,c)$-Morse quasigeodesic.
\end{lemma}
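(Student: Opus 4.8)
The plan is to use the MLTG property to reduce the global statement to a purely local one, and then to reduce that local statement to the analysis of a single junction. Concretely, I would first isolate the following two-piece building block: if \(\alpha\) and \(\beta\) are \(M\)--Morse geodesics sharing an initial endpoint \(o\) and satisfying \(\langle \alpha_+, \beta_+\rangle_o \leq \varepsilon'\), then the concatenation \(\overline{\alpha}\,\beta\) is an \((M_0; \lambda_0, c_0)\)--Morse quasigeodesic, where \(M_0, \lambda_0, c_0\) depend only on \(M\) and \(\varepsilon'\) and, crucially, \emph{not} on the lengths of \(\alpha, \beta\). Granting this, let \(\delta = \delta(M)\) be the thin-triangles constant of Lemma~\ref{lem:morse_thin_triangles}, apply the building block with \(\varepsilon' = \varepsilon + O(\delta)\) to obtain \(M_0, \lambda_0, c_0\) depending only on \(M, \varepsilon\), and feed these into the MLTG property to produce a scale \(L\) and global data \(N, \lambda, c\), again depending only on \(M, \varepsilon\). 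I would then set \(B := \max\{L, \text{(internal threshold of the building block)}\}\), so that \(B\) too depends only on \(M, \varepsilon\). The point of this ordering is that the building block's constants are obtained before, and independently of, \(B\), so choosing \(B \geq L\) creates no circularity.

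With \(B \geq L\) fixed, any subpath of \(p\) of length at most \(L\) overlaps at most two consecutive pieces \(p_{k-1}, p_k\): it cannot contain an entire interior piece, since those have length greater than \(B \geq L\), and the only non-interior pieces \(p_1, p_n\) are too far apart to both lie in one window once \(n \geq 3\) (the cases \(n \leq 2\) being handled by the building block directly). Such a window is therefore a subsegment of a two-piece concatenation through the junction \(a_k\); subsegments of \(M\)--Morse geodesics are again \(M\)--Morse, and coarse monotonicity of the Gromov product (a consequence of the thin-triangles estimate) bounds the relevant product at \(a_k\) by \(\varepsilon + O(\delta) = \varepsilon'\). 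Hence every length-\(\leq L\) subpath of \(p\) is an \((M_0; \lambda_0, c_0)\)--Morse quasigeodesic, i.e. \(p\) is an \((L; M_0; \lambda_0, c_0)\)--local Morse quasigeodesic, and the MLTG property upgrades this to the desired global \((N; \lambda, c)\)--Morse quasigeodesic.

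It remains to prove the building block, and here the quasigeodesic estimate is the routine part. Applying Lemma~\ref{lem:morse_thin_triangles} to the triangle with sides \(\alpha\), \(\beta\) and a geodesic \(r = [\alpha_+, \beta_+]\) shows this triangle is \(\delta\)--thin, and the hypothesis \(\langle \alpha_+, \beta_+\rangle_o \leq \varepsilon'\) forces \(o\) to lie within \(\varepsilon' + 2\delta\) of \(r\). Combining \(\delta\)--thinness with the almost-additivity \(d(\alpha_+, \beta_+) \geq \ell(\alpha) + \ell(\beta) - 2\varepsilon'\) then yields that \(\overline{\alpha}\,\beta\) satisfies the quasigeodesic inequality with \(\lambda_0, c_0\) depending only on \(\delta, \varepsilon'\).

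The main obstacle is the \emph{Morse} part of the building block: showing \(\overline{\alpha}\,\beta\) is \(M_0\)--Morse with \(M_0\) independent of the lengths of \(\alpha, \beta\). It is precisely this length-independence that prevents a fixed-point circularity between the window Morse gauge and the MLTG scale, which is why one cannot simply invoke that bounded-length quasigeodesics are automatically Morse. My plan is to show that an arbitrary \((\lambda, c)\)--quasigeodesic \(\eta\) with endpoints \(\alpha_+, \beta_+\) must pass within a bounded distance \(t\) of \(o\), with \(t\) depending only on \(M, \varepsilon', \lambda, c\). Given this, let \(o'\) be a nearest point of \(\eta\) to \(o\); appending \([o', o]\) to each half of \(\eta\) produces quasigeodesics from \(\alpha_+\) to \(o\) and from \(o\) to \(\beta_+\), which by the Morse property of \(\alpha\) and \(\beta\) individually fellow-travel those geodesics up to a constant depending only on \(M\) and \((\lambda, c)\); reassembling shows \(\eta\) and \(\overline{\alpha}\,\beta\) are Hausdorff-close, giving the uniform \(M_0\). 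The crux is thus the bound on \(t = d(o, \eta)\): this is where the large-angle hypothesis must be converted, via the contracting/projection behaviour of the Morse geodesics \(\alpha, \beta\) (equivalently, a bounded-geodesic-image-type estimate for Morse geodesics), into the statement that a quasigeodesic joining their far endpoints cannot avoid a neighbourhood of \(o\). I expect this projection estimate to be the technical heart of the argument.
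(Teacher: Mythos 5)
Your proposal follows the same skeleton as the paper's proof: both reduce the lemma to showing that \(p\) is a local Morse quasigeodesic at a scale \(L\) supplied by the MLTG property, both observe that once \(B \geq L\) (the paper takes \(B \geq L + \varepsilon\)) a window of parametrised length at most \(L\) meets at most two consecutive pieces, and both handle such a window by viewing it as a subsegment of a two-piece concatenation \(p_{k-1} \ast p_k\) whose Gromov product at the junction is controlled; the constants are quantified in the same non-circular order (building-block gauge first, then the MLTG scale, then \(B\)). The one genuine divergence is your treatment of the ``building block'': the paper does not prove it, but cites \cite[Lemma 2.15]{MLTG}, which is essentially verbatim your statement that the concatenation of two \(M\)--Morse geodesics with Gromov product at most \(\varepsilon'\) at the junction is a Morse quasigeodesic with gauge and constants depending only on \(M\) and \(\varepsilon'\), independently of the lengths. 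So your plan is structurally identical to the paper's, with the cited lemma unpacked rather than quoted.

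On the building block itself, your sketch is plausible but, as you yourself flag, incomplete at its crux: the uniform bound on \(d(o,\eta)\). This gap can be closed without developing projection/contraction machinery, by the standard fact that the third side \(r = [\alpha_+,\beta_+]\) of a geodesic triangle whose other two sides are \(M\)--Morse is itself \(M''\)--Morse with \(M''\) depending only on \(M\): combined with Lemma~\ref{lem:morse_thin_triangles}, which together with the hypothesis \(\langle \alpha_+,\beta_+\rangle_o \leq \varepsilon'\) bounds \(d(o,r)\) by \(\varepsilon'\) plus a multiple of \(\delta(M)\), one gets \(d(o,\eta) \leq \varepsilon' + O(\delta) + M''(\lambda,c)\), after which your splitting-and-reassembling argument goes through; alternatively one simply cites \cite[Lemma 2.15]{MLTG} as the paper does. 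Two small simplifications are also available: the monotonicity of the Gromov product you invoke is exact rather than coarse (if \(x \in p_{k-1}\) and \(y \in p_k\), then \(\langle x, y\rangle_{a_k} \leq \langle a_{k-1}, a_{k+1}\rangle_{a_k}\) with no \(\delta\)-error, since \(x\) and \(y\) lie on geodesics ending at \(a_k\)), and the same one-line computation gives the local quasigeodesic constants \((1, 2\varepsilon)\) directly, so no thin-triangle argument is needed for the quasigeodesic part.
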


\begin{proof}
    We will show that there exists $M'$ depending only on \(M\) and $\varepsilon$ such that $p$ is locally a $(M'; 1, 2\epsilon)$-Morse quasigeodesic, and then we will choose an appropriate $B$ to use the MLTG property. 
    Given the existence of such \(M'\), the MLTG property gives us a Morse gauge \(N\) and constants \(\lambda \geq 1, c \geq 0, L \geq 0\) such that any \((L;M';1,2\varepsilon)\)-local Morse quasigeodesic is also a \((N;\lambda,c)\)-Morse quasigeodesic.
    
    We start with the quasigeodesic claim. 
    Take \(B \geq L + \varepsilon\) and observe that if we consider two points $x,y$ at parameterised distance at most $L$, they either lie on the same segment $p_i$ (which is geodesic), or on two consecutive segments $p_{i-1}$ and  $p_{i}$. 
    In the latter case, we have 
    \[
        \langle x, y\rangle_{a_i}\leq \langle a_{i-1}, a_{i+1}\rangle_{a_i} \leq \varepsilon
    \] 
    which means 
    \[
        \dist(x,y) + 2\epsilon \geq \dist(a_i,x) + \dist(a_i, y)= \ell(p_{i-1}\vert_{[x,a_i]}\ast p_i\vert_{[a_i,y]}).
    \] 

    Thus, $p$ is an $(L; 1, 2\epsilon)$-local quasigeodesic. 
    A similar computation to that above (with \(x = a_{i-1}, y = a_{i+1}\)) shows that 
    \[
        \dist(a_{i-1},a_{i+1}) \geq 2B - 2 \varepsilon > L
    \]
    where the last inequality comes from the choice of \(B\).
    Now, applying \cite[Lemma 2.15]{MLTG} to each concatenation $p_i\ast p_{i+1}$, we obtain some $M'$ (depending only on \(M\) and \(\varepsilon\)) such that $p$ is an $(L; M'; 1, 2\epsilon)$-local Morse quasigeodesic. 
    Now applying the MLTG property shows that \(p\) is $(N;\lambda,c)$-Morse quasigeodesic.
\end{proof}

The property of stability generalises the notion of having the Morse property from quasigeodesics to arbitrary subgroups.

\begin{defn}[Stable subgroup]
    Let \(G\) be a group with finite generating set \(S\), and let \(M\) be a Morse gauge and \(\mu \geq 0\) a constant.
    A subgroup \(H \leqslant G\) is called \emph{\((M,\mu)\)-stable} if any geodesic in \(\mathrm{Cay}(G,S)\) with endpoints in \(H\) is $M$-Morse and lies in the \(\mu\)-neighbourhood of \(H\).
    A subgroup is called \emph{stable} if it is \((M,\mu)\)-stable for some Morse gauge \(M\) and \(\mu \geq 0\).
\end{defn}

An immediate consequence of the definition is that a stable  subgroup of a finitely generated group is undistorted. We note that while the gauge \(M\) and constant \(\mu\) in the above depend on the choice of generating set \(S\), the property of being stable does not (see, for example, \cite[Lemma 3.4]{DurhamTaylorStability}). We recall some basic properties of stability, which follow from \cite{DurhamTaylorStability}.

\begin{lemma}
\label{lem:props_of_stable_subgps}
    Let \(G\) be a group with finite generating set \(S\) and suppose \(H \leqslant G\) is \((M,\mu)\)-stable.
    Then the following are true:
    \begin{enumerate}
        \item \label{item: passage to subgroups} if \(K \leqslant_f H\), then \(K\) is \((M,\mu')\)-stable for some \(\mu' \geq 0\);
        \item if \(g \in G\), then \(gHg^{-1}\) is stable;
        \item \(H\) is finitely generated and undistorted in \(G\);
        \item \(H\) is hyperbolic.
    \end{enumerate}
   \end{lemma}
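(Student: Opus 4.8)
The plan is to prove the four items in the order (1), (2), (3), (4), since the finite generation established in (3) is needed before one can speak of geodesics inside \(H\) for (4). Throughout I will use two standard consequences of Definition \ref{def:MorseQG}: that an \(M\)--Morse geodesic and a \((\lambda,c)\)--quasigeodesic whose endpoints lie within a bounded distance \(R\) of each other must fellow-travel at a distance controlled by \(M, \lambda, c\) and \(R\); and hence that any geodesic lying within bounded Hausdorff distance of an \(M\)--Morse geodesic is itself \(M'\)--Morse for a gauge \(M'\) depending only on \(M\) and that distance. For (1), the point is that a finite-index subgroup \(K \leqslant H\) lies at finite Hausdorff distance from \(H\) in \(\mathrm{Cay}(G,S)\): writing \(H = \bigsqcup_i K h_i\) as a finite union of right cosets and setting \(D = \max_i |h_i|_S\), left-invariance of the metric gives \(d(kh_i, k) = |h_i|_S \le D\), so \(H \subseteq \mathcal{N}_D(K)\). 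A geodesic with endpoints in \(K\) then has endpoints in \(H\), so it is \(M\)--Morse and lies in \(\mathcal{N}_\mu(H) \subseteq \mathcal{N}_{\mu + D}(K)\); thus \(K\) is stable with gauge \(M\) and constant \(\mu + D\).

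For (2) I would reduce the conjugate \(gHg^{-1}\) to the left coset \(gH\), which is an isometric copy of \(H\). Left translation by \(g\) is an isometry sending \(H\) to \(gH\); as it carries \(M\)--Morse geodesics to \(M\)--Morse geodesics and preserves distances to the subgroup, \(gH\) is stable with the same data \((M,\mu)\). Moreover \(gHg^{-1}\) lies at Hausdorff distance at most \(|g|_S\) from \(gH\), since \(d(ghg^{-1}, gh) = |g|_S\). Given a geodesic \(\gamma\) with endpoints in \(gHg^{-1}\), I pick the geodesic \(\sigma\) between the nearby points of \(gH\); by stability \(\sigma\) is \(M\)--Morse and close to \(gH\), hence close to \(gHg^{-1}\), and by the fellow-traveling facts above \(\gamma\) is \(M'\)--Morse and stays uniformly close to \(\sigma\), hence to \(gHg^{-1}\). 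This exhibits \(gHg^{-1}\) as stable.

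For (3), set \(\mathcal{S} = \{h \in H : |h|_S \le 2\mu + 1\}\), which is finite since balls in \(\mathrm{Cay}(G,S)\) are finite. Given \(h \in H\), take a \(G\)--geodesic from \(e\) to \(h\); its integer vertices \(v_0 = e, \dots, v_n = h\), where \(n = |h|_S\), lie in \(\mathcal{N}_\mu(H)\) by stability, so choose \(h_i \in H\) with \(d(v_i, h_i) \le \mu\) and \(h_0 = e\), \(h_n = h\). Then \(d(h_i, h_{i+1}) \le 2\mu + 1\), so each \(h_i^{-1}h_{i+1}\) lies in \(\mathcal{S}\) and \(h\) is a product of \(n\) elements of \(\mathcal{S}\). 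This simultaneously shows that \(\mathcal{S}\) generates \(H\) and that \(|h|_{\mathcal{S}} \le |h|_S \le (2\mu+1)|h|_{\mathcal{S}}\), so the inclusion is a quasi-isometric embedding and \(H\) is finitely generated and undistorted.

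Finally, for (4) I would show that \(\mathrm{Cay}(H, \mathcal{S})\) has uniformly thin geodesic triangles and invoke the Rips characterization of hyperbolicity. Given \(x, y, z \in H\), the \(G\)--geodesics \([x,y]\) and \([x,z]\) are \(M\)--Morse by stability, so Lemma \ref{lem:morse_thin_triangles} makes the \(G\)--geodesic triangle on \(x,y,z\) \(\delta(M)\)--thin. Using the undistortion from (3), any \(\mathcal{S}\)--geodesic triangle on \(x,y,z\) maps to a quasigeodesic triangle in \(\mathrm{Cay}(G,S)\) with the same vertices; by the Morse fellow-traveling facts each side stays uniformly close to the corresponding \(G\)--geodesic side, so the image triangle is uniformly thin, with constant depending only on \(M\) and the distortion constants. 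Pulling this back through the quasi-isometric embedding \((H, d_{\mathcal{S}}) \hookrightarrow (G, d_S)\) yields a uniform thin-triangles condition in \(\mathrm{Cay}(H, \mathcal{S})\), whence \(H\) is hyperbolic. I expect (4) to be the main obstacle: unlike the first three items it is not a direct Hausdorff-distance manipulation but requires transporting the thin-triangles condition across the quasi-isometric embedding while tracking Morse gauges, and one must be careful that the compared geodesics and quasigeodesics genuinely share, or nearly share, endpoints so that the Morse property applies.
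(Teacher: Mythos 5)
Your proposal is correct, but it cannot be compared line-by-line with the paper's own argument for a simple reason: the paper offers no proof of this lemma at all, stating only that the properties ``follow from \cite{DurhamTaylorStability}'', with Remark~\ref{rmk: weak stability for subgroups} recording the one observation that matches your item (1) --- any subgroup of \(H\) inherits Morseness of geodesics, and the finite index hypothesis is exactly what makes the neighbourhood constant finite. Your self-contained treatment is essentially the standard one from that literature, carried out directly from this paper's definition of stability. Items (1)--(3) are correct as written: the right-coset argument for (1), the translation-plus-bounded-Hausdorff-distance argument for (2) (noting that the resulting gauge and constant for \(gHg^{-1}\) may depend on \(\abs{g}_S\), which is harmless), and the connect-the-dots argument for (3), which is the classical proof that quasiconvex subgroups are finitely generated and undistorted. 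For (4), your plan also goes through, and the difficulty you flag resolves itself favourably: the image of an \(\mathcal{S}\)-geodesic side of a triangle under the inclusion \(H \hookrightarrow G\) is, after interpolating consecutive vertices by \(G\)-geodesics of length at most \(2\mu+1\), a uniform quasigeodesic in \(\mathrm{Cay}(G,S)\) with \emph{exactly} the same endpoints as the corresponding \(M\)--Morse \(G\)-geodesic, so Definition~\ref{def:MorseQG} applies with no endpoint adjustment; combining with Lemma~\ref{lem:morse_thin_triangles} and pulling distances back via \(d_{\mathcal{S}} \leq d_S\) on \(H\) gives uniformly thin triangles in \(\mathrm{Cay}(H,\mathcal{S})\). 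The only ingredients you leave unproved are the two ``standard consequences'' invoked at the outset (fellow-travelling of a Morse geodesic with a quasigeodesic having nearby endpoints, and invariance of the Morse property under bounded Hausdorff distance); these are genuinely standard facts, but since the paper never states them, a fully self-contained write-up would need to include their short proofs (extend the quasigeodesic by geodesic segments to match endpoints, then apply the Morse property and the triangle inequality for Hausdorff distance).
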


The following lemma tells us that stable subgroups cannot be ``too parallel'' away from their intersection. 
More precisely, that there is a uniform upper bound on the Gromov product of elements from one subgroup when taken with minimal length coset representatives of the other. 

\begin{lemma}
\label{lem:stable_gromov_product_bound}
    Let \(G\) be a group with finite generating set \(S\), and suppose that \(H\) and \(K\) are \((M,\mu)\)-stable subgroups of \(G\).
    There is a constant \({\rho} = {\rho}(M,\mu,S) \geq 0\) such that if \(h \in H\) is a shortest (with respect to \(S\)) representative of its right coset \((H \cap K)h\), then for any \(k \in K\), we have \(\langle h, k \rangle_1 \leq {\rho}\). 
\end{lemma}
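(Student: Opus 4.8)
The plan is to argue by contradiction: if \(\langle h,k\rangle_1\) is large, I will manufacture an element of \(H\cap K\) that left-translates \(h\) to a strictly shorter representative of the coset \((H\cap K)h\), contradicting the minimality of \(h\). The geometric engine is that a large Gromov product forces the geodesics \([1,h]\) and \([1,k]\) to fellow-travel for a long initial stretch, and stability lets me upgrade nearby points on these geodesics to nearby elements of \(H\) and of \(K\).

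First I would fix geodesics \(p=[1,h]\) and \(q=[1,k]\). Since \(1,h\in H\) and \(1,k\in K\) and both subgroups are \((M,\mu)\)-stable, \(p\) and \(q\) are \(M\)--Morse, so by Lemma~\ref{lem:morse_thin_triangles} the triangle \(p\cup q\cup[h,k]\) is \(\delta\)-thin with \(\delta=\delta(M)\). Comparing with the associated tripod, the leg based at \(1\) has length exactly \(\langle h,k\rangle_1\); hence for every integer \(t\le \langle h,k\rangle_1\) the points \(x_t\in p\) and \(y_t\in q\) at distance \(t\) from \(1\) map to the same tripod point and so satisfy \(d(x_t,y_t)\le\delta\). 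Stability now produces \(a_t\in H\) and \(b_t\in K\) with \(d(x_t,a_t)\le\mu\) and \(d(y_t,b_t)\le\mu\), whence \(d(a_t,b_t)\le 2\mu+\delta\) and \(s_t:=a_t^{-1}b_t\) lies in the finite set of elements of \(S\)--length at most \(2\mu+\delta\), whose cardinality \(C\) depends only on \(S,M,\mu\).

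Then comes a pigeonhole step. Looking only at indices \(t\) that are pairwise more than \(2\mu\) apart, if \(\langle h,k\rangle_1\) exceeds a threshold \(\rho\) depending only on \(\mu\) and \(C\), there are more than \(C\) such indices, so two of them, \(t_1<t_2\) with \(t_2-t_1>2\mu\), satisfy \(s_{t_1}=s_{t_2}\). Rearranging gives \(a_{t_2}a_{t_1}^{-1}=b_{t_2}b_{t_1}^{-1}\), and since the left side lies in \(H\) and the right side in \(K\), this common element \(g\) lies in \(H\cap K\).

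Finally I would run the shortening estimate using left-invariance of the word metric: \(d(1,g^{-1}h)\le d(1,a_{t_1})+d(a_{t_2},h)\le (t_1+\mu)+(\mu+(|h|-t_2))=|h|-(t_2-t_1)+2\mu<|h|\), the last inequality because \(t_2-t_1>2\mu\). As \(g^{-1}h\in(H\cap K)h\), this contradicts \(h\) being a shortest representative, forcing \(\langle h,k\rangle_1\le\rho\). The one step that genuinely needs care — the main obstacle — is the first: verifying that the fellow-travelling really persists all the way out to parameter \(\langle h,k\rangle_1\) with a constant \(\delta\) depending only on \(M\) and not on \(h,k\). This is precisely what the tripod comparison underlying Lemma~\ref{lem:morse_thin_triangles} delivers, so once that is in hand the remaining counting and metric estimates are routine.
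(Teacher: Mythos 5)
Your proof is correct and follows essentially the same route as the paper's: both fix geodesics \([1,h]\) and \([1,k]\), use stability plus Lemma~\ref{lem:morse_thin_triangles} to produce elements of \(H\) and \(K\) within uniformly bounded distance along the initial fellow-travelling stretch, pigeonhole on the bounded-length connecting elements (your \(s_t=a_t^{-1}b_t\) plays the role of the paper's shortest \(g_i\) with \(v_ig_i\in K\)) to find a pair far apart with equal connectors, and derive an element of \(H\cap K\) that shortens \(h\), contradicting minimality. The only differences are cosmetic (tripod parametrisation versus nearest-point projection, and your distance estimates fix a sign slip in the paper's displayed inequality (1)).
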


\begin{proof}
    Suppose for a contradiction that we can find elements $h\in H, k\in K$ such that $h$ is a shortest coset representative of $h(H\cap K)$ and $\langle h,k\rangle_1$ is arbitrarily large. 
    Since $H$ and $K$ are stable, any choice of geodesics $p = [1,h]$ and $q = [1,k]$ are \(M\)-Morse and lie in a \(\mu\)-neighbourhood of \(H\) and \(K\) respectively. 

    Let \(a_1, \dots, a_n\) be the vertices of \(p\) with \(\dist_S(1,a_i) \leq \langle h, k \rangle_1\).
    The assumption that \(\langle h, k \rangle_1\) can be taken to be arbitrarily large means that \(n\) can be taken to be arbitrarily large.
    Corresponding to each vertex \(a_i\), there is \(v_i \in H\) such that \(\dist_S(a_i, v_i) \leq \mu\) by stability of \(H\).
    Moreover, by Lemma~\ref{lem:morse_thin_triangles} there is \(\delta = \delta(M) \geq 0\) such that \(\dist_S(a_i, q) \leq \delta\) for each \(i = 1, \dots, n\).
    By stability of \(K\) and the triangle inequality, therefore, we obtain that \(\dist_S(v_i,K) \leq 2\mu + \delta\) for each \(i = 1, \dots, n\).
    Note that since \(p\) is geodesic
    \begin{equation}
    \label{eq:bd_on_vi_vj}
        \dist_S(1,v_i) \leq i - \mu \qquad \text{ and } \qquad \dist_S(v_i, h) \leq \dist_S(1,h) - i - \mu
    \end{equation}
    for each \(i = 1, \dots, n\).
    
    For each $i= 1, \dots, n$, let $g_i$ be the shortest element of \(G\) with respect to \(S\) such that $v_ig_i\in K$, so \(\abs{g_i}_S \leq 2\mu + \delta\). 
    Let \(N\) be the number of elements in the ball of radius \(2\mu + \delta\) about the identity in \(\mathrm{Cay}(G,S)\).
    Taking \(n\)  to be sufficiently large with respect to \(N\) and \(\mu\), there must be some pair \((i, j)\) with \(g_i = g_j\) satisfying \(j-i > 2\mu\).
    Then equation (\ref{eq:bd_on_vi_vj}) gives
    \begin{equation}
    \label{eq:vi+vj<h}
        \dist_S(1, v_i) + \dist_S(v_j, h) < \dist_S(1,h).
    \end{equation} 
    But then $v_jg_j(v_ig_i)^{-1} = v_jv_i^{-1} \in H\cap K$, as $v_i, v_j\in H$ and $v_ig_i, v_jg_j \in K$.
    Moreover 
    \begin{align*}
        \dist_S(v_jv_i^{-1}, h) &\leq \dist_S(v_jv_i^{-1},v_j) + \dist_S(v_j,h) \\
        &= \dist_S(1,v_i) + \dist_S(v_j,h) < \dist_S(1,h)
    \end{align*}
    where the last inequality is an application of (\ref{eq:vi+vj<h}). It follows that $v_iv_j^{-1} h\in (H\cap K) h$ and 
    \[\abs{v_i v_j^{-1} h}_S = \dist_S(v_jv_i^{-1}, h) < \dist_S(h,1) = \abs{h}_S\]

    which contradicts the fact that \(h\) is a minimal length representative of its \((H \cap K)\)-coset.
    Thus, there must be an upper bound on the Gromov product.
\end{proof}

We finish this section by recalling the key property that we need for our proof, namely that the MLTG property allows a ping-pong type argument for stable subgroups.

\begin{prop}[{\cite[Theorem 3.1]{MLTG}}]
\label{prop:stable_combination_thm}
    Let \(G\) be a group with finite generating set \(S\) and suppose \(G\) has the Morse local-to-global property.
    Let \(Q, R \leqslant G\) be \((M,\mu)\)-stable subgroups of \(G\).
    There is a constant \(C = C(M,\mu,S) \geq 0\) such that the following is true.

    Let \(Q' \leqslant Q\) and \(R' \leqslant R\) be subgroups such that \(Q' \cap R' = Q \cap R\) and \(\abs{g}_S \geq C\) for each \(g \in (Q' \cup R') \setminus (Q' \cap R')\).
    Then \(\langle Q', R' \rangle \cong Q' \ast_{Q' \cap R'} R'\).
    Moreover, if \(Q'\) and \(R'\) are finitely generated and undistorted in \(G\), then \(\langle Q', R' \rangle\) is stable.
\end{prop}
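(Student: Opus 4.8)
The plan is to model reduced words of the amalgam \(Q'\ast_{Q'\cap R'}R'\) by explicit concatenations of geodesics in \(\mathrm{Cay}(G,S)\) and to certify these as Morse quasigeodesics using Lemma~\ref{lem:concat_of_morse_geodesics}; this yields nontriviality of reduced words (hence the isomorphism) and, with more work, stability. To fix the constant, let \(\rho = \rho(M,\mu,S)\) be as in Lemma~\ref{lem:stable_gromov_product_bound}, let \(B,\lambda,c\) be the outputs of Lemma~\ref{lem:concat_of_morse_geodesics} applied to the gauge \(M\) and \(\varepsilon = \rho\), and set \(C > \max\{B,\lambda c\}\); crucially all of these depend only on \(M,\mu,S\). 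I would also fix, once and for all, left transversals \(T_{Q'},T_{R'}\) of \(Q'\cap R'\) in \(Q'\) and \(R'\) consisting of shortest representatives for \(\abs{\cdot}_S\), noting that each nontrivial transversal element lies in \((Q'\cup R')\setminus(Q'\cap R')\) and hence has \(S\)--length at least \(C\) by hypothesis.

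For the isomorphism, by the normal form theorem for amalgamated products it suffices to show that the natural surjection \(\phi\colon Q'\ast_{Q'\cap R'}R' \to \langle Q',R'\rangle\) sends every normal form \(t_1\cdots t_n c\) with \(n\geq 1\) (the \(t_i\) nontrivial, alternating between \(T_{Q'}\) and \(T_{R'}\), and \(c\in Q'\cap R'\)) to a nontrivial element of \(G\). Setting \(x_0 = 1\), \(x_i = t_1\cdots t_i\) and absorbing the trailing \(c\) into the last syllable, I would join consecutive \(x_{i-1},x_i\) by geodesics \(p_i\). Each \(p_i\) is a left translate of a geodesic between \(1\) and an element of \(Q\) or \(R\), hence \(M\)--Morse by stability, and each interior segment has length \(\abs{t_i}_S \geq C > B\). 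The key computation is the Gromov product at each internal vertex: by left-invariance \(\langle x_{i-1},x_{i+1}\rangle_{x_i} = \langle t_i^{-1},t_{i+1}\rangle_1\), and since \(t_i\) is a shortest representative of its left coset, \(t_i^{-1}\) is a shortest representative of the right coset \((Q\cap R)t_i^{-1}\); Lemma~\ref{lem:stable_gromov_product_bound} then bounds this product by \(\rho\). Now Lemma~\ref{lem:concat_of_morse_geodesics} shows \(p\) is a \((\lambda,c)\)--quasigeodesic of length at least \(C\), so \(d_S(x_0,x_n) \geq C/\lambda - c > 0\) and \(\phi(t_1\cdots t_n c)\neq 1\).

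For the moreover clause, I would first record that \(Q'\) and \(R'\), being finitely generated and undistorted in the stable---hence hyperbolic---subgroups \(Q\) and \(R\), are quasiconvex therein and therefore stable in \(G\); in particular any geodesic with endpoints in \(Q'\) (resp.\ \(R'\)) is uniformly Morse and lies in a uniform neighbourhood of \(Q'\) (resp.\ \(R'\)). Given \(g = t_1\cdots t_n c\in\langle Q',R'\rangle\), the path \(p\) above is a Morse \((\lambda,c)\)--quasigeodesic from \(1\) to \(g\) with constants independent of \(g\); each segment \(p_i\) lies in a uniform neighbourhood of the coset \(x_{i-1}Q'\) or \(x_{i-1}R'\subseteq\langle Q',R'\rangle\), so \(p\) lies uniformly close to \(\langle Q',R'\rangle\). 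A geodesic \([1,g]\) then stays uniformly close to \(p\) (they share endpoints and \(p\) is Morse), so it is uniformly Morse and lies in a uniform neighbourhood of \(\langle Q',R'\rangle\), while \(d_S(1,g)\asymp \ell(p)\asymp\sum\abs{t_i}_S\) gives undistortion. Translating by \(a^{-1}\) upgrades this to arbitrary endpoints \(a,b\in\langle Q',R'\rangle\), yielding stability.

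The main obstacle I anticipate is the bookkeeping around coset representatives in the Gromov-product step: Lemma~\ref{lem:stable_gromov_product_bound} only controls the overlap when one factor is a shortest coset representative, so the entire argument hinges on taking length-minimising transversals and exploiting the left/right coset symmetry that lets me pass between \(t_i\) and \(t_i^{-1}\). The secondary subtlety is justifying that \(Q'\) and \(R'\) are themselves stable in \(G\), which is precisely what forces the path \(p\) to track the subgroup \(\langle Q',R'\rangle\) rather than merely the cosets of \(Q\) and \(R\).
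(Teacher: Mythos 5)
The paper never proves this proposition: it is imported verbatim from \cite[Theorem~3.1]{MLTG}, so there is no internal argument to compare yours against. Taken on its own terms, your reconstruction is essentially correct, and it is the natural one given the paper's toolkit: you bound the Gromov products at the break points via Lemma~\ref{lem:stable_gromov_product_bound} (your inversion trick, converting shortest left-coset representatives into shortest representatives of the right cosets \((Q\cap R)t_i^{-1}\), is exactly right, and is the one place where the hypothesis \(Q'\cap R' = Q\cap R\) genuinely enters), and you then apply Lemma~\ref{lem:concat_of_morse_geodesics} to certify the broken path of a normal form as a global \((N;\lambda,c)\)--Morse quasigeodesic, so that the choice \(C > \max\{B,\lambda c\}\) forces every normal form with at least one syllable to survive in \(G\), giving injectivity. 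This mirrors both the original argument of \cite{MLTG} and the argument this paper runs in Case~2 of the proof of Theorem~\ref{thm:MTLG_stable_product_sep}, so you have correctly reverse-engineered the intended mechanism.

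Two assertions in your ``moreover'' paragraph are true but unproven, and in a complete write-up each needs a proof or a citation. First, the claim that \(Q'\) and \(R'\) (finitely generated and undistorted in \(G\)) are themselves stable in \(G\): this requires undistortion of \(Q'\) in \(Q\), the standard fact that finitely generated undistorted subgroups of the hyperbolic group \(Q\) are quasiconvex, and the further fact that quasiconvex subgroups of stable subgroups are stable in the ambient group. None of these appears in the paper, and Remark~\ref{rmk: weak stability for subgroups} explicitly warns that general subgroups of stable subgroups only inherit stability with \(\mu' = \infty\); so this is precisely where your undistortion hypothesis must do real work, and it deserves more than a parenthetical. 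Second, the claim that a geodesic at bounded Hausdorff distance from an \(N\)--Morse quasigeodesic with the same endpoints is itself uniformly Morse: since Definition~\ref{def:MorseQG} quantifies over quasigeodesics with endpoints at \emph{arbitrary} points of the path (not just its ends), this transfer is a genuine, if standard, lemma rather than a triviality. Neither point is a false step, but both should be made explicit.
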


%%%%%%%%%%%%%%%%%%%%%%%%%%%%%%%%%
%         NEW SECTION           %
%%%%%%%%%%%%%%%%%%%%%%%%%%%%%%%%%

\section{Separability of products}
In this section we will prove the main theorem. We start with some elementary observations about separable subsets.
\begin{rmk}
    If \(U \subseteq G\) is a separable subset of \(G\), then \(U^{-1}, gU\), and \(Ug\) are separable for any \(g \in G\).
\end{rmk}

\begin{rmk}
\label{rmk:sep_of_products}
    Let \(H_1, \dots, H_n \leqslant G\) be subgroups of \(G\) and let \(a_0, \dots, a_n \in G\).
    Observe that
    \[
        a_0 H_1 a_1 \dots a_{n-1} H_n a_n = H_1^{a_0} H_2^{a_0 a_1} \dots H_n^{a_0 \dots a_{n-1}} a_0 \dots a_n,
    \]
    which is a translate of a product of conjugates of the subgroups \(H_1, \dots, H_n\).
    The set \(a_0 H_1 a_1 \dots a_{n-1} H_n a_n\) is thus separable if and only if the product of subgroups \(H_1^{a_0} \dots H_n^{a_0 \dots a_{n-1}}\) is separable.

    In particular, suppose there is \(n \in \NN\) such that any product of \(n\) stable subgroups are separable in \(G\), and suppose \(H_1, \dots, H_n\) are stable subgroups of \(G\). 
    Lemma~\ref{lem:props_of_stable_subgps}(2) gives that \(H_i^{a_0 \dots a_{i-1}}\) is stable for each \(1 \leq i \leq n\), so that the set \(H_1^{a_0} \dots H_n^{a_0 \dots a_{n-1}}\) is a product of \(n\) stable subgroups.
    By the observation above we may conclude that the set \(a_0 H_1 a_1 \dots a_{n-1} H_n a_n\) is separable in this situation.
\end{rmk}

In order to exploit the geometric properties afforded by the MLTG property, it is useful to choose coset representative that are geometrically meaningful, which we can do by the following remark.

\begin{rmk} 
\label{rmk:shortest_reps_in_product_RHS}
    Suppose that \(S\) is a generating set for group \(G\) and let \(H_1, \dots, H_n \leqslant G\) be subgroups of \(G\).
    Given elements \(x_1 \in H_1, \dots, x_n \in H_n\), there are elements \(y_1 \in H_1, \dots, y_n \in H_n\) such that \(x_1 \dots x_n = y_1 \dots y_n\) and \(\abs{y_i}_S\) is minimal among elements of the coset \((H_{i-1} \cap H_{i})y_i\) for each \(1 < i \leq n\).

    Indeed, there is \(y_n \in H_n\) and \(z_n \in H_n \cap H_{n-1}\) such that \(x_n = z_ny_n\) and \(\abs{y_n}_S\) is minimal among elements of \((H_{n-1} \cap H_n)x_n = (H_{n-1} \cap H_n)y_n\).
    Similarly there is \(y_{n-1} \in H_{n_1}\) and \(z_{n-1} \in H_{n-1} \cap H_{n-2}\) such that \(x_{n-1}z_n = z_{n-1}y_{n-1}\) and \(y_{n-1}\) is a shortest representative of \((H_{n-2} \cap H_{n-1})x_{n-1}z_{n} = (H_{n-2} \cap H_{n-1})y_{n-1}\).
    We can proceed by finite induction to find elements \(y_2\in H_2, \dots, y_{n} \in H_{n}\) and \(z_2 \in H_2 \cap H_3, \dots, z_{n} \in H_{n-1} \cap H_n\) with the properties described above.
    Setting \(y_1 = x_1 z_{2}  \in H_1\) completes the observation.
\end{rmk}

We conclude the section by proving the main theorem of the paper and the related corollaries.

\begin{proof}[Proof of Theorem~\ref{thm:MTLG_stable_product_sep}]
    We proceed by induction on the number \(n\) of stable subgroups.
    The case \(n = 1\) is exactly the hypothesis that stable subgroups of \(G\) are separable, so let \(H_1, \dots, H_n\) be stable subgroups of \(G\) with \(n > 1\) and suppose that the product of any \(n-1\) stable subgroups of \(G\) is separable.
    
    Fix a finite generating set \(S\) of \(G\).
    By taking maxima of gauges and constants, we may assume without loss of generality that \(H_1, \dots, H_n\) are \((M,\mu)\)-stable.
    Let \(\rho = \rho(M,\mu,S)\) be the constant obtained from Lemma~\ref{lem:stable_gromov_product_bound}, and let \(C = C(M,\mu,S)\) be the constant of Proposition~\ref{prop:stable_combination_thm}.
    Let \(B, \lambda, c \geq 0\) be the constants and \(N\) the Morse gauge obtained from applying Lemma~\ref{lem:concat_of_morse_geodesics} with gauge \(M\) and constant \(\rho\).
    
    Suppose for a contradiction that the product \(H_1 \dots H_n\) is not separable, so that there is some \(g \notin H_1 \dots H_n\) belonging to the profinite closure of \(H_1 \dots H_n\).
    This means that \(g\) is contained in every separable subset containing \(H_1 \dots H_n\).
    For ease of reading, we will write \(Q = H_1, R = H_2,\) and \(T_i = H_{i+2}\) whenever \(1 \leq i \leq s = n-2\).
    By hypothesis \(Q\) and \(R\) are separable, and thus their intersection \(I = Q \cap R\) is also.
    Let \(\{N_i\}_{i \in \NN}\) be an enumeration of the finite index subgroups of \(G\) containing \(I\), and note that \(I = \bigcap_{i \in \NN}N_i\) as \(I\) is separable.
    For each \(i\), we write
    \[
        N'_i = \bigcap_{j=1}^{i} N_j
    \]
    so that \(\{N'_i\}_{i\in\NN}\) is a sequence of nested finite index subgroups of \(G\) containing \(I\) whose intersection is equal to \(I\).

    For each \(i \in \NN\), we define the set
    \[
        K_i = Q \langle Q'_i, R'_i \rangle R T_1 \dots T_{s}
    \]
    where \(Q'_i = N'_i \cap Q \leqslant_f Q\) and \(R'_i = N'_i \cap R \leqslant_f R\).
    Note that \(I \subseteq N'_i\) for each \(i \in \NN\), so that \(Q'_i \cap R'_i = I\).
    It is immediate from the definition that \(K_i \supseteq QRT_1 \dots T_s\) for each \(i \in \NN\).
    Our aim is to show that for sufficiently large \(i\), the set \(K_i\) is separable and excludes the element \(g\).

    Let us first show that \(K_i\) is separable when \(i\) is large.
    Indeed, for a given \(i\), let \(x_1, \dots, x_a\) be left coset representatives for \(Q'_i\) in \(Q\) and \(y_1, \dots, y_b\) be right coset representatives for \(R'_i\) in \(R\).
    Then we have
    \[
        K_i = \bigcup^a_{j=1} \bigcup^b_{k=1} x_j \langle Q'_i, R'_i \rangle y_k T_1 \dots T_s.
    \]
    Since \(I\) is the intersection of all \(N'_i\), there is an index \(i_0 \in \NN\) such that for any \(i \geq i_0\), any element \(n \in N'_i\) with \(\abs{n}_S \leq C\) belongs to \(I\).
    By (1) and (3) of Lemma~\ref{lem:props_of_stable_subgps}, \(Q'_i\) and \(R'_i\) are finitely generated and undistorted, so we may apply Proposition~\ref{prop:stable_combination_thm} to obtain that \(\langle Q'_i, R'_i \rangle\) is stable.
    Thus, by Remark~\ref{rmk:sep_of_products} and the induction hypothesis, \(K_i\) can be written as a finite union of separable subsets, and so \(K_i\) is separable whenever \(i \geq i_0\).

    We now show that there is \(i \in \NN\) such that \(g \notin K_i\).
    As \(g\) belongs to the profinite closure of \(QRT_1 \dots T_s\) and for each \(i \geq i_0\) the set \(K_i\) is a profinitely closed subset of \(G\) containing \(QRT_1 \dots T_s\), \(g\) belongs to \(K_i\) for each \(i \geq i_0\).
    That is, for each \(i \geq i_0\) we may write
    \begin{equation}
    \label{eq:product_rep_of_g}
        g = q^{(i)} x_1^{(i)} \dots x_{m_i}^{(i)} r^{(i)} t_1^{(i)} \dots t_s^{(i)}
    \end{equation}
    for some  \(m_i \in \NN\) and \(x_j^{(i)} \in Q'_i \cup R'_i\) for each \(1 \leq j \leq m_i\), and where \( q^{(i)} \in Q, r^{(i)} \in R, t_1^{(i)} \in T_1, \dots, t_s^{(i)} \in T_s\).

    The remainder of the argument may be split into two essentially different cases based on the lengths of the elements obtained above: we summarise them here.
    
    In one case, the lengths of infinitely many of the elements \(r^{(i)}, t_1^{(i)}, \dots, t_{s-1}^{(i)}\) remain bounded as \(i\) tends to infinity.
    When this happens, we may pass to a subsequence where one these terms is constant in \(i\).
    This reduces the number of subgroups we have to consider in the product and we may apply the induction hypothesis to obtain our contradiction.
    The other situation to consider is when the lengths of these elements increase without bound.
    In this case, for sufficiently large values of \(i\) the products as in (\ref{eq:product_rep_of_g}) define paths that are (arbitrarily long) local Morse quasigeodesics.
    The MLTG property then shows that these are actually Morse quasigeodesics, resulting in another contradiction.

    \medskip
    
    \underline{\emph{Case 1:}}
        \(\liminf_{i \to \infty} \abs{r^{(i)}}_S < \infty\) or \(\liminf_{i \to \infty} \abs{t_j^{(i)}}_S < \infty\) for some \(1 \leq j < s\).

        We consider only the possibility that \(\liminf_{i \to \infty} \abs{r^{(i)}}_S < \infty\), for the other cases can be dealt with identically.
        It follows from this assumption that there is a subsequence of \((r^{(i)})_{i \in \NN}\) whose terms have length bounded by some fixed constant.
        Since there are only finitely many elements of \(G\) with any given length with respect to \(S\), we may pass to a further subsequence whose terms are all equal to a single element \(r \in R\).
        Hence we have
        \begin{equation}
        \label{eq:g_in_reduced_product}
            g \in Q \langle Q'_i, R'_i \rangle r T_1 \dots T_s \;\; \textrm{for infinitely many }i \in \NN.
        \end{equation}
        
        Now by the induction hypothesis and Remark~\ref{rmk:sep_of_products}, the set \(QrT_1 \dots T_s\) is separable in \(G\).
        Since \(g \notin QRT_1 \dots T_s\), we have \(g \notin Q r T_1 \dots T_s\), and so there is \(N \lhd_f G\) such that \(g \notin NQrT_1 \dots T_s = QNrT_1 \dots T_s\).
        The subgroup \(IN \leqslant_f G\) is a finite index subgroup of \(G\) containing \(I\), so \(N'_{i_1} \subseteq IN\) for some \(i_1 \in \NN\).
        Since the sequence of subgroups \(\{N'_i\}_{i \in \NN}\) is nested, we have thus shown that
        \[
            Q \langle Q'_i, R'_i \rangle r T_1 \dots T_s \subseteq Q N'_i r T_1 \dots T_s \subseteq Q I N r T_1 \dots T_s = Q N r T_1 \dots T_s
        \]
        for any \(i \geq i_1\), where the last equality uses the fact that \(QI = Q\).
        However, the fact that \(g \notin Q N r T_1 \dots T_s\) now contradicts the inclusions of (\ref{eq:g_in_reduced_product}), so this case is impossible.

    \medskip
    
    \underline{\emph{Case 2:}}
        \(\liminf_{i \to \infty} \abs{r^{(i)}}_S = \infty\) and \(\liminf_{i \to \infty} \abs{t_j^{(i)}}_S = \infty\) for all \(1 \leq j < s\).

        Define \(z_0 = 1, z_1 = q^{(i)}, z_2 = z_1 x_1^{(i)}, \dots, z_{m_i + 1} = z_{m_i}x_{m_i}^{(i)}, z_{m_i + 2} = z_{m_i + 1}r^{(i)}\), and \(z_{m_i + 3} = z_{m_i + 2}t_1^{(i)}, \dots, z_{m_i + 2 + s} = z_{m_i + s} t_s^{(i)}\).  For each \(0 \leq j \leq m_i + 1 + s\), we let \(p_j\) be a geodesic with \((p_j)_- = z_j\) and \((p_j)_+ = z_{j+1}\). Let \(p\) be the concatenation \(p_0 \ast \dots \ast p_{m_i + 1 + s}\) of these paths.

        We will use Lemma~\ref{lem:concat_of_morse_geodesics} to conclude that the path \(p\) is a uniform quasigeodesic. 
        Assuming this, the fact that \(\liminf_{i \to \infty} \abs{r^{(i)}}_S = \infty\) and \(\liminf_{i \to \infty} \abs{t_j^{(i)}}_S = \infty\) means that for sufficiently large \(i\), the distance between the endpoints of $p$ is greater than \(\abs{g}_S\), contradicting the fact that $p$ represents $g$. 

        Without loss of generality, we may assume \(x_1^{(i)} \in R'_i \setminus Q\) and \(x_{m_i}^{(i)} \in Q'_i \setminus R\), for otherwise we may replace \(q^{(i)}\) with \(q_1^{(i)} = q^{(i)} x_1^{(i)} \in Q\) and eliminate \(x_1^{(i)}\) from the product (and likewise with \(r^{(i)}\) and \(x_{m_i}^{(i)}\)).
        Further, we may assume by Remark~\ref{rmk:shortest_reps_in_product_RHS} that \(x_1^{(i)}, \dots, x_{m_i}^{(i)},\) and \(r^{(i)}\) are shortest representatives of their right \(I\)-cosets, and in particular \(x_1^{(i)}, \dots, x_{m_i}^{(i)}, r^{(i)} \notin I\). 
        Similarly we take \(t_1^{(i)}\) to be a shortest representative of \((R \cap T_1) t_1^{(i)}\) and, for \(1 < i \leq s\), the element \(t_j^{(i)}\) to be a shortest representative of $(T_{j-1} \cap T_{j})t_j^{(i)}$.

        The above paragraph together with Lemma~\ref{lem:stable_gromov_product_bound} shows that
        \begin{equation}
        \label{eq:zi_gromov_product_boudn}
            \langle z_{j-1}, z_{j+1} \rangle_{z_j} \leq {\rho} \quad \textrm{ for } j = 1, \dots, m_i + s.
        \end{equation}

        We now verify the hypotheses of Lemma~\ref{lem:concat_of_morse_geodesics}.
        Each of the geodesic segments \(p_i\) represents an element of a finite index subgroup of one of \(Q, R, T_1, \dots, T_{s-1},\) or \(T_s\),
        Therefore by Lemma~\ref{lem:props_of_stable_subgps}(1), we obtain that the geodesic segments $p_i$ are $M$-Morse. 
        For any given \(B' \geq B\) (recall that \(B\) is the constant of Lemma~\ref{lem:concat_of_morse_geodesics} applied with \(M\) and \(\rho\)) we deduce the following.
        Since \(\liminf_{i \to \infty} \abs{r^{(i)}}_S = \infty\) and \(\liminf_{i \to \infty} \abs{t_j^{(i)}}_S = \infty\) we have that \(\abs{r^{(i)}}_S > B'\) and \(\abs{t_j^{(i)}}_S > B'\) for each \(j = 1, \dots, s\) and sufficiently large \(i\).
        Moreover, since  $x_{j}^{(i)}\in  \left(Q'_i \cup R'_i\right)\setminus I\subseteq N_i'\setminus I$ and since $\bigcap N_i' = I$, for $i$ large enough we have  $ \abs{{ x_{j}^{(i)}}}_S> B'$. 
        Thus Lemma~\ref{lem:concat_of_morse_geodesics} implies that \(p\) is \((N; \lambda,c)\)-Morse quasigeodesic.
        Finally, choosing \(B'\) sufficiently large with respect to \(\lambda, c,\) and \(\abs{g}_S,\) gives us that the endpoints of \(p\) are a greater distance than \(\abs{g}_S\) apart, the desired contradiction.

        From the above, there is some \(i \in \NN\) such that \(K_i\) is separable, contains the product \(QRT_1 \dots T_s\), and excludes \(g\).
        Therefore the product \(QRT_1 \dots T_s = H_1 \dots H_n\) is separable.
\end{proof}

\begin{proof}[Proof of Corollary~\ref{cor:special_stable_product}]
    Stable subgroups are quasiconvex, and quasiconvex subgroups of virtually special groups are separable by \cite[Corollary 7.9]{Haglund-Wise}.
    Moreover, \(\operatorname{CAT}(0)\) groups have the MLTG property by \cite[Theorem D]{MLTG}.
    Therefore Theorem~\ref{thm:MTLG_stable_product_sep} applies to give the result.
\end{proof}

\begin{proof}[Proof of Corollary~\ref{cor:raag_strongly_qc_product}]
    Let \(H_1, \dots, H_n\) be strongly quasiconvex subgroups of \(A_\Gamma\).
    If there is some \(1 \leq i \leq n\) such that \(H_i\) has finite index, then the product \(H_1 \dots H_n\) is a union of finitely many cosets of \(H_i\).
    Since \(H_i\) has finite index, it is separable in \(A_\Gamma\), whence \(H_1 \dots H_n\) is separable.

    Now suppose that each of the subgroups \(H_1, \dots, H_n\) has infinite index in \(A_\Gamma\).
    By \cite[Corollary 7.4]{RSTConvexity}, they must be stable subgroups.
    Noting that \(A_\Gamma\) is \(\operatorname{CAT}(0)\) and hence has the MLTG property \cite[Theorem D]{MLTG}, the conclusion follows by applying Theorem~\ref{thm:MTLG_stable_product_sep}.
\end{proof}

\bibliographystyle{alpha}
\bibliography{bibtex}

\end{document}